\newtheorem{theorem}{Theorem}
\newtheorem{lemma}[theorem]{Lemma}
\newtheorem{proposition}[theorem]{Proposition}
\begin{document}

\title{Equidistribution of zeros of polynomials} 
 \author{K. Soundararajan} 
\address{Department of Mathematics \\ Stanford University \\
450 Serra Mall, Bldg. 380\\ Stanford, CA 94305-2125}
\email{ksound@math.stanford.edu}
\thanks{} 
\begin{abstract}{A classical result of Erd{\H o}s and Tur{\' a}n states that if a monic polynomial has 
small size on the unit circle and its constant coefficient is not too small, then its zeros cluster near the unit circle and become equidistributed 
in angle.   Using Fourier analysis we give a short and self-contained proof of this result.}   \end{abstract} 
  
 \maketitle

 \section{Introduction.}  
 
 Any set of $N$ complex numbers may be viewed as the zero set of a polynomial of degree $N$.  If, however, 
 we start with a polynomial that ``arises naturally"\textemdash for example, think of polynomials with coefficients $\pm 1$\textemdash then the 
 zeros will tend to be ``evenly distributed near the unit circle."  
 In \cite{ErdosTuran}, Erd{\H o}s and Tur{\' a}n proved the beautiful result that if the size of a monic polynomial on the 
 unit circle is small, and its constant term is not too small, then its zeros cluster around the unit circle and become equally distributed in sectors.  We shall make precise 
 both the hypothesis and conclusion of this statement later, but we hope Figure 1 gives an impression of the phenomenon.  
 The Erd{\H o}s-Tur{\' a}n result was subsequently refined by Ganelius \cite{Ganelius} and Mignotte \cite{Mignotte}, 
and in this note we give a short and self-contained proof, obtaining as a bonus a modest improvement of the previous results.

 %

Let 
$$ 
P(z) = \prod_{j=1}^{N} (z-\alpha_j) = z^N + a_{N-1} z^{N-1} + \cdots + a_0 
$$ 
be a polynomial of degree $N$, and write the roots $\alpha_j$ as $\alpha_j =\rho_j e^{i\theta_j}$.   It may be helpful 
to think first of situations where the roots are {\sl not} equidistributed near the unit circle.  For example, one could have 
the polynomial $(z-1)^N = \sum_{j=0}^{N} (-1)^j \binom{N}{j} z^j$, where all the roots are concentrated at one point $z=1$ 
and clearly not spread out evenly.  This polynomial has large coefficients, and on the unit circle it attains a maximum size of $2^N$.    
A different type of example is the polynomial $z^N -1/2^N$.  Here the polynomial takes only small values on the unit circle, 
but all the roots are on the circle with radius $1/2$.   A more extreme version of this example is the polynomial $z^N$.

\begin{figure} \label{figure1}
\includegraphics[scale=.35]{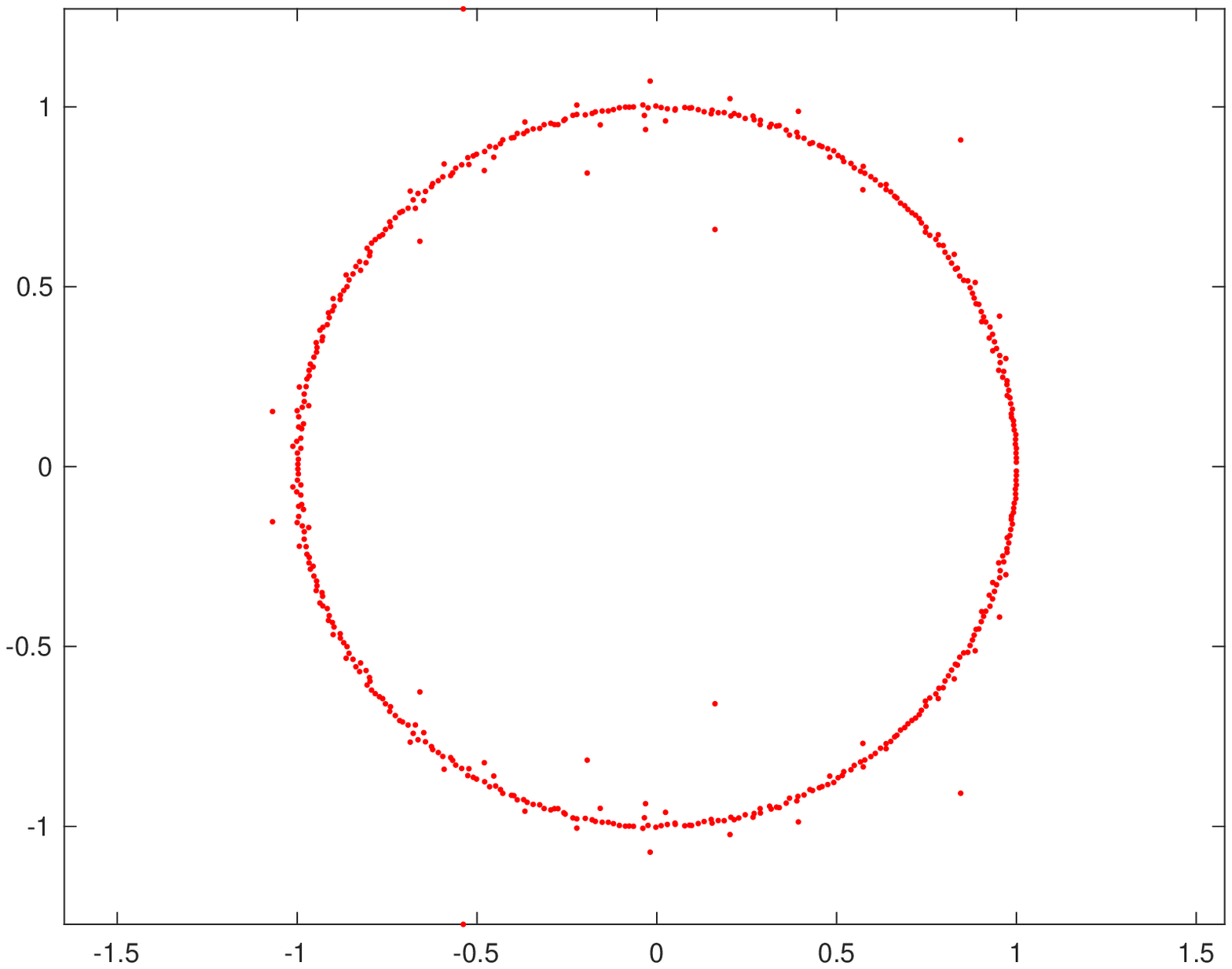} \qquad 
\includegraphics[scale=.35]{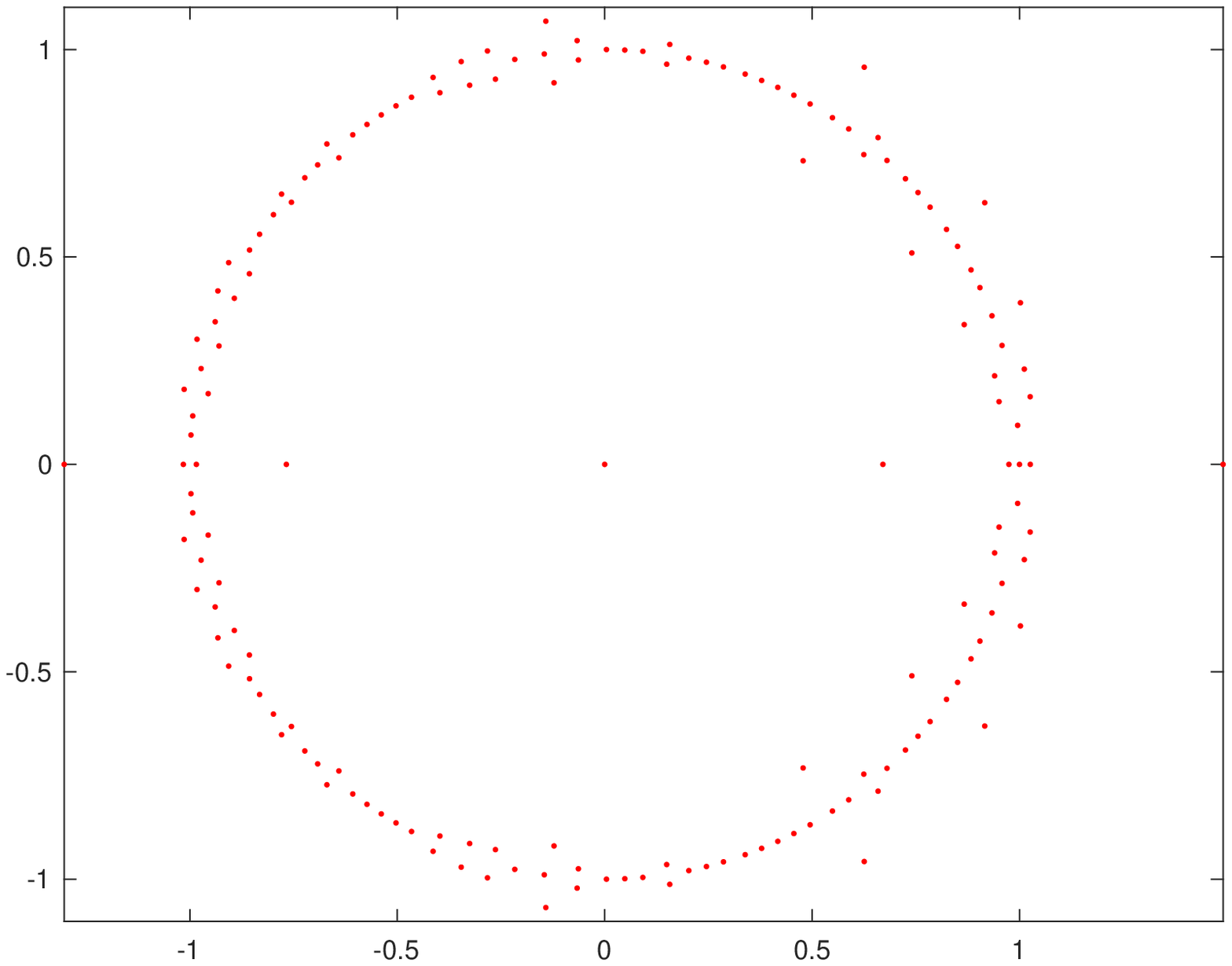} 
\caption{Left:  Zeros of a polynomial of degree $500$ formed with the decimal digits of $\pi$:  $3z^{500}+z^{499}+4z^{498}+\cdots$.  
Right:  Zeros of the Fekete polynomial $\sum_{j=0}^{162} (\frac{j}{163}) z^j$ where the coefficients are given by the Legendre symbol 
$(\frac{j}{163}) = \pm 1$ for $1\le j \le 162$. }
  \end{figure}

These examples indicate that it would be necessary to assume that the size of $P$ on the unit circle must be small, and that 
the constant term $a_0$ should not be too small in order to establish equidistribution of zeros.  Henceforth we 
 will assume that $a_0\neq 0$ so that the roots $\alpha_j  $ are all nonzero. 
 One convenient 
 measure of the size of coefficients is the quantity 
 \begin{equation*} 
 \label{1} 
 H(P) = \max_{|z| = 1} \frac{|P(z)|}{\sqrt{|a_0|}}. 
 \end{equation*}  
The triangle inequality gives, with the convention $a_N=1$, 
$$ 
H(P) \le \frac{1}{\sqrt{|a_0|}} \sum_{j=0}^{N} |a_j|. 
$$ 
On the other hand, Parseval's formula gives 
$$ 
H(P)^2 = \max_{|z|=1} \frac{|P(z)|^2}{|a_0|} \ge \frac{1}{|a_0|} \frac{1}{2\pi }\int_0^{2\pi} |P(e^{i\theta})|^2 d\theta = 
\frac{1}{|a_0|} \sum_{j=0}^{N} |a_j|^2. 
$$ 
Combining our upper bound for $H(P)$ with the Cauchy--Schwarz inequality we find that 
\begin{equation} 
\label{2} 
\frac{1}{|a_0|} \sum_{j=0}^{N} |a_j|^2 \le  H(P)^2 \le \frac{N}{|a_0|} \sum_{j=0}^{N} |a_j|^2. 
\end{equation} 
Assuming that $H(P)$ is small is therefore equivalent to assuming that the coefficients of $P$ are small and that 
the constant coefficient $a_0$ is not too small.  Here by ``small" we mean that $H(P)$ is not exponentially large in $N$; 
for example, one could think of a condition like $H(P) \le e^{\epsilon N}$ for suitably small $\epsilon$.  In fact, we shall formulate 
the Erd{\H o}s--Tur{\' a}n theorem in terms of the slightly more refined quantity 
\begin{equation*} 
\label{1.1}  
h(P) = \frac{1}{2\pi } \int_0^{2\pi } \log^+ \frac{|P(e^{i\theta})|}{\sqrt{|a_0|}} d\theta, 
\qquad\text{where} \qquad \log^+ x= \max(0, \log x). 
\end{equation*} 
Since $H(P) \ge (|a_0|^2 + |a_N|^2)/|a_0| = (|a_0|^2 +1)/|a_0| \ge 1$ in view of the lower bound in \eqref{2}, the quantity $h(P)$ satisfies 
$$ 
h(P) \le \log \max_{|z| =1} \frac{|P(z)|}{\sqrt{|a_0|}} = \log H(P), 
$$ 
so that the assumption that $h(P)$ is small is weaker than the assumption that $H(P)$ is small. 

We now turn to the question of how to quantify the idea that zeros are equidistributed around the unit circle.  
We do this in two stages, first discussing the magnitude of zeros, and then discussing the spacings of their 
arguments.  To treat the magnitude of the zeros (recall $\alpha_j=\rho_j e^{i\theta_j}$), we define  
\begin{equation*} 
\label{4} 
{\mathcal M}(P) = \prod_{j=1}^{N} \max\Big( \rho_j, \frac{1}{\rho_j}\Big). 
\end{equation*} 
As an easy consequence of Jensen's formula from complex analysis, we shall establish the 
following upper bound for ${\mathcal M}(P)$ in terms of $h(P)$.  

\begin{theorem} \label{thm1}  With notations as above,
$$ 
{\mathcal M}(P) \le \exp(2 h(P)). 
$$ 
\end{theorem}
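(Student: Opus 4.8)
The plan is to deduce the theorem directly from Jensen's formula. First I would record the elementary identity $\prod_{j=1}^{N}\rho_j = |P(0)| = |a_0|$, coming from $P(0) = \prod_{j=1}^N(-\alpha_j)$. Applying Jensen's formula to $P$ then gives
$$
\frac{1}{2\pi}\int_0^{2\pi}\log|P(e^{i\theta})|\,d\theta = \log|a_0| + \sum_{\rho_j<1}\log\frac{1}{\rho_j} = \sum_{\rho_j\ge 1}\log\rho_j,
$$
the second equality being just $\sum_j\log\rho_j = \log|a_0|$. Equivalently, one can get this by integrating the linear factors $\log|e^{i\theta}-\alpha_j|$ term by term and using $\frac{1}{2\pi}\int_0^{2\pi}\log|e^{i\theta}-\alpha|\,d\theta = \log\max(|\alpha|,1)$; in this form it is transparent that roots lying on $|z|=1$ contribute only an integrable singularity and cause no harm.

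Next I would rewrite $\log\mathcal{M}(P)$ to match. Splitting the roots according to whether $\rho_j\ge 1$ or $\rho_j<1$,
$$
\log\mathcal{M}(P) = \sum_{\rho_j\ge 1}\log\rho_j + \sum_{\rho_j<1}\log\frac{1}{\rho_j},
$$
and since $\sum_{\rho_j<1}\log(1/\rho_j) = \sum_{\rho_j\ge 1}\log\rho_j - \log|a_0|$ (again from $\prod_j\rho_j=|a_0|$), this collapses to
$$
\log\mathcal{M}(P) = 2\sum_{\rho_j\ge 1}\log\rho_j - \log|a_0|,
$$
which by the identity of the previous paragraph equals $\frac{1}{2\pi}\int_0^{2\pi}\log\!\big(|P(e^{i\theta})|^2/|a_0|\big)\,d\theta$.

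Finally, since $\log x\le\log^+x$ for all $x>0$ and $\log^+\!\big(|P|^2/|a_0|\big) = 2\log^+\!\big(|P|/\sqrt{|a_0|}\big)$, the last integral is at most
$$
\frac{1}{2\pi}\int_0^{2\pi}\log^+\frac{|P(e^{i\theta})|^2}{|a_0|}\,d\theta = 2\cdot\frac{1}{2\pi}\int_0^{2\pi}\log^+\frac{|P(e^{i\theta})|}{\sqrt{|a_0|}}\,d\theta = 2h(P),
$$
and exponentiating gives $\mathcal{M}(P)\le\exp(2h(P))$. There is no genuine obstacle here: the entire content is Jensen's formula, the middle steps are bookkeeping with the roots inside versus outside the unit disk, and the passage from $\log$ to $\log^+$ at the very end is exactly what converts the resulting \emph{identity} for $\log\mathcal{M}(P)$ into the stated inequality. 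The only point worth a careful word is the legitimacy of Jensen's formula when $P$ has zeros on $|z|=1$; and the family $P(z)=z^N-c$ with $c$ small shows the bound is sharp, so no further slack can be extracted.
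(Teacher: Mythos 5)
Your proof is correct and is essentially the paper's own argument: Jensen's formula combined with $\prod_j \rho_j = |a_0|$ yields the identity $\log {\mathcal M}(P) = \frac{1}{2\pi}\int_0^{2\pi} \log\bigl(|P(e^{i\theta})|^2/|a_0|\bigr)\,d\theta$, and replacing $\log$ by $\log^+$ gives the bound. The only cosmetic difference is that you expand $\log{\mathcal M}(P)$ directly, whereas the paper adds the two equivalent forms of the Jensen integral; the content is identical.
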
 

To gain a sense of this result, suppose we knew the upper bound ${\mathcal M}(P) \le \exp(\epsilon^2 N)$.  Then 
it would follow that at most $\epsilon N$ zeros can lie outside the band $e^{-\epsilon} \le |z| \le e^{\epsilon}$.   Or, in 
other words, most of the zeros will lie inside a narrow band around the unit circle.

The more difficult part of the Erd{\H o}s--Tur{\' a}n theorem concerns the equidistribution of the angles $\theta_j$.     
 Given an arc $I$ on the unit circle, let $N(I;P)$ denote the number of zeros $\alpha_j$ with $e^{i\theta_j}$ lying on this arc.   If the angles $\theta_j$ are equidistributed, then we may expect $N(I;P)$ to be roughly $\frac{N}{2\pi}$ times the length of the arc $I$ (which we will denote by $|I|$).    A convenient way to measure equidistribution is the 
 {\sl discrepancy}, which is defined as 
 \begin{equation*} 
 \label{1.2} 
{\mathcal D}(P) =  \max_{I} \Big|  N(I;P) - \frac{|I|}{2\pi} N\Big| . 
\end{equation*}  
In other words, the discrepancy measures the {\sl worst case} deviation of the actual count of the 
number of angles lying on a given arc from the number that one would expect if the angles were equidistributed.  A bound ${\mathcal D}(P) \le \epsilon N$, for suitably small $\epsilon$, would indicate that the angles $\theta_j$ are evenly distributed.

\begin{theorem} \label{thm2}  With notations as above, 
\begin{equation} 
\label{1.4}
{\mathcal D}(P) \le \frac{8}{\pi} \sqrt{N h(P)}. 
\end{equation}
\end{theorem}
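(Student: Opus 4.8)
The plan is to follow the classical Fourier-analytic route to the Erd\H{o}s--Tur\'an discrepancy bound, using the ``erdos--turan inequality''-type argument adapted to zeros on (and near) the unit circle. The starting point is the observation that the number of angles $\theta_j$ in an arc $I$ can be written as a sum of the indicator function of $I$ evaluated at the $\theta_j$, and that a sharp bound on the discrepancy of a point set in terms of exponential sums $\sum_j e^{ik\theta_j}$ is available (for instance the Erd\H{o}s--Tur\'an inequality, or a direct Fej\'er-kernel smoothing argument that one can include in a couple of lines). So the first step is to reduce Theorem~\ref{thm2} to estimating, for each positive integer $k$, the power sums $S_k \assign \sum_{j=1}^N e^{ik\theta_j}$, or rather a suitably weighted version that accounts for the fact that the roots are not exactly on the circle.

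The second step is to relate these angular exponential sums to the coefficients of $P$ via Newton's identities / the logarithmic derivative. Writing $\alpha_j = \rho_j e^{i\theta_j}$, one naturally controls $\sum_j \alpha_j^{-k}$ (the Newton sums of $1/P$, which are polynomial in the $a_i/a_0$) and $\sum_j \alpha_j^k$; since $\sum_j \alpha_j^{-k} = \sum_j \rho_j^{-k} e^{-ik\theta_j}$, the difference between this and $\sum_j e^{-ik\theta_j}$ is governed by $\sum_j |\rho_j^{-k}-1|$, and a symmetric statement holds using $\alpha_j^k$ for the roots with $\rho_j>1$. Here is where Theorem~\ref{thm1} enters: the bound $\prod_j \max(\rho_j,1/\rho_j)\le \exp(2h(P))$ means $\sum_j |\log \rho_j|$ is small, hence $\sum_j |\rho_j^{-k}-1|$ is controlled (roughly by $k$ times that sum), so replacing the ``true'' angular sums by the Newton sums costs an error that is acceptable after one sums over $k\le K$ for the optimal cutoff $K$.

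The third step is the analytic heart: bounding the Newton sums $\sum_j \alpha_j^{-k}$ themselves in terms of $h(P)$. The logarithmic derivative $P'(z)/P(z) = \sum_j 1/(z-\alpha_j)$, integrated against $z^k$ on the unit circle, extracts these Newton sums; alternatively $\log|P(e^{i\theta})/\sqrt{|a_0|}|$ has Fourier coefficients that, by Jensen's formula applied on circles of radius slightly less than and slightly more than $1$, encode $\sum_j \alpha_j^{-k}$ and $\sum_j \alpha_j^k$ for the roots inside and outside the disk. The key inequality is that the $k$-th Fourier coefficient of $\log|P(e^{i\theta})|$ is, in absolute value, at most $\frac{2}{k}$ times $\frac{1}{2\pi}\int \log^+ (|P(e^{i\theta})|/\sqrt{|a_0|})\,d\theta = h(P)$ plus a comparable contribution; combined with Parseval this yields $\sum_{k\le K} \frac1k |S_k|^2$-type bounds, or directly a bound like $|S_k| \le \frac{N}{k+1} + (\text{something})\cdot h(P)$. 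One then inserts these into the Erd\H{o}s--Tur\'an inequality $\mathcal{D}(P) \ll \frac{N}{K} + \sum_{k=1}^{K} \frac{1}{k}|S_k|$, and optimizes over $K$ (the right choice will be of order $\sqrt{N/h(P)}$), tracking constants carefully to land on the clean $\frac{8}{\pi}\sqrt{N h(P)}$.

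The main obstacle I expect is twofold: getting the constants sharp enough to reach $8/\pi$ rather than some larger absolute constant, which requires using a clean form of the Erd\H{o}s--Tur\'an inequality (likely with an optimized Fej\'er/Selberg-type kernel) and not being wasteful in the passage from angular sums to Newton sums; and handling the roots with $\rho_j$ far from $1$, where $|\rho_j^{-k}-1|$ is not small but the count of such roots is tiny by Theorem~\ref{thm1} --- one must split the roots by size and estimate the two regimes separately, using $\mathcal{M}(P)\le \exp(2h(P))$ as the quantitative input. A cleaner alternative that avoids the root-magnitude bookkeeping is to work directly with the harmonic function $u(re^{i\theta}) = \log|P(re^{i\theta})|$ and its boundary behavior, extracting the discrepancy bound from the positivity of the measure $\frac{1}{2\pi}\Delta u$ (which is the zero-counting measure); I would try this route first, since it tends to give the best constants and sidesteps Newton's identities entirely, falling back on the power-sum approach only if the constant does not come out right.
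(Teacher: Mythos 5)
Your plan is in the right family (Fourier analysis, power sums of the angles controlled by $\log|P|$ on the circle, Fej\'er-type smoothing, optimize a cutoff), and it would indeed produce a bound of the correct order $C\sqrt{N\,h(P)}$. But as a proof of the stated theorem it has a genuine gap: the route you describe cannot reach the constant $8/\pi$. The paper's power-sum bound is $\bigl|\sum_j e^{ik\theta_j}\bigr|\le 4|k|\,h(P)$, i.e.\ it grows linearly in $k$, so when you feed it termwise into an Erd{\H o}s--Tur{\' a}n inequality of the shape ${\mathcal D}\le \frac{N}{K+1}+c\sum_{k\le K}\frac1k|S_k|$ the $1/k$ weights buy nothing and you get ${\mathcal D}\le \frac{N}{K+1}+4cKh(P)$, hence (with the classical $c=3$) about $4\sqrt{3}\,\sqrt{Nh(P)}\approx 6.9\sqrt{Nh(P)}$; even the sharpest Selberg-majorant versions of that inequality leave you well above $8/\pi\approx 2.55$. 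The paper avoids this loss by never applying the triangle inequality over $k$: it uses the exact identity $\sum_j e^{ik\theta_j}=-\frac{|k|}{\pi}\int_0^{2\pi}e^{ik\theta}\log|P(e^{i\theta})|\,d\theta$ and carries the $k$-sum back inside the integral, so that only $G=\max_\theta\bigl|\sum_k |k|\widehat g(k)e^{ik\theta}\bigr|$ enters (Proposition 1); and for $g={\mathcal I}_\delta * {\mathcal K}_\delta$ with the triangular (Fej\'er-type) kernel it exploits cancellation in that sum, via the Fourier expansion of $|\sin x|$, to get $G\le \frac{2}{\pi^2}{\mathcal K}_\delta(0)$ rather than the $\ell^1$ bound $\frac1\pi{\mathcal K}_\delta(0)$ --- that factor $2/\pi$, together with the one-sided majorant-plus-complementary-arc trick, is exactly what produces $\frac{16}{\pi\delta}h(P)+\frac{\delta}{\pi}N$ and hence $\frac{8}{\pi}\sqrt{Nh(P)}$. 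None of these three ingredients (the exact identity, keeping the $k$-sum inside the integral, the $|\sin x|$ refinement) appears in your outline, and "tracking constants carefully" in the ET-inequality framework will not substitute for them.

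A secondary, fixable inefficiency: your handling of roots off the unit circle via Newton sums and Theorem 1 (splitting roots by $|\rho_j-1|$ and paying $\sum_j|\rho_j^{-k}-1|$) works in order of magnitude but leaks further constants. The paper instead uses Schur's observation: on $|z|=1$ one has $|P(z)|/\sqrt{|a_0|}\ge \prod_j|z-e^{i\theta_j}|$, so replacing $P$ by the polynomial with the same angles but all moduli equal to $1$ only decreases $h$, and the discrepancy is unchanged; after that no bookkeeping with the $\rho_j$ is needed at all, and Theorem 1 plays no role in the proof of Theorem 2. Finally, your proposed "cleaner alternative" via positivity of $\frac{1}{2\pi}\Delta\log|P|$ is only a gesture at this stage --- as written it contains no mechanism for extracting the sharp constant either.
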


 
 Theorems \ref{thm1} and \ref{thm2} together establish that if $h(P)$ is small compared to $N$, then 
 the zeros of $P$ cluster around the unit circle and become equidistributed in angle.  For example, 
 if the coefficients of $P$ are always $\pm 1$, then since $H(P) \le N+1$, it follows from Theorem \ref{thm1} 
 that ${\mathcal M}(P) \le (N+1)^2$, and from Theorem \ref{thm2} that ${\mathcal D}(P) \le \frac{8}{\pi} \sqrt{N \log (N+1)}$. 
 
 %
  Erd{\H o}s and Tur{\' a}n \cite{ErdosTuran} first established a version of \eqref{1.4}, with the constant $8/\pi$ replaced by $16$ and with $\log H(P)$ instead of $h(P)$.  Ganelius \cite{Ganelius} showed an estimate like \eqref{1.4}, again with $\log H(P)$ instead of $h(P)$, but with a better constant than Erd{\H o}s and Tur{\' a}n, namely with $\sqrt{2\pi/k} = 2.5619\ldots$ (with $k=1/1^2-1/3^2+1/5^2-\cdots=0.9159\ldots $ denoting the Catalan constant) instead of $16$.  Mignotte \cite{Mignotte}  refined Ganelius's result, replacing $\log H(P)$ by the sharper $h(P)$.  Note that our theorem sharpens the Ganelius--Mignotte result slightly, since $8/\pi = 2.5464\ldots$ is a little smaller than $\sqrt{2\pi/k}$.   There is some scope to improve the constant $8/\pi$ 
  (especially in the situation  where $h(P)$ is small compared to $N$), but Amoroso and Mignotte \cite{AM} have produced examples showing that the constant in \eqref{1.4} must be at least $\sqrt{2}$.

There is a vast literature surrounding zeros of polynomials, and we give a few references to related work.    For the 
distribution of zeros of polynomials with $0$, $1$ coefficients see \cite{OP};   for work on ``Fekete polynomials" where 
the coefficients equal the Legendre symbol $\pmod p$, see \cite{CGPS}; for work on random polynomials with coefficients 
drawn independently from various distributions (where Theorems \ref{thm1} and \ref{thm2} will apply with high probability), see \cite{HN}; for two recent variants on the Erd{\H o}s--Tur{\' a}n theorem, see \cite{TV} and \cite{Erdelyi}.    While the Erd{\H o}s--Tur{\' a}n result applies to all polynomials 
with complex coefficients, in number theory greater interest is attached to irreducible polynomials with integer coefficients.  If 
$P(x) = a_N x^N+ \cdots +a_0 \in {\Bbb Z}[x]$ is a polynomial with roots $\alpha_j$, then a central object here is the {\sl Mahler measure} 
which is $M(P) = |a_N| \prod_{j=1}^{N} \max(1, |\alpha_j|)$.  A beautiful result of Bilu \cite{Bilu} states that if $P$ is an irreducible 
polynomial in ${\Bbb Z}[x]$ and $M(P) \le (1+\epsilon)^N$ is not large, then the zeros of $P$ cluster near the unit circle and 
are equidistributed; for a gentle exposition, see \cite{Granville}.   Any discussion of zeros of polynomials would be incomplete without a mention of Lehmer's outstanding open problem that 
the smallest value of $M(P)$ that is larger than $1$ is $M(L) =1.1762\ldots$, attained for Lehmer's polynomial $L(x) = x^{10}+x^{9} -x^7 -x^6-x^5-x^4-x^3+x+1$;  for a recent comprehensive survey, see \cite{Smyth}.   
Finally, our proof uses ideas from Fourier analysis; two lovely 
books in this area are \cite{Korner} and \cite{Montgomery}.

  \section{Jensen's formula and the proof of Theorem \ref{thm1}.}  
  
We begin with the easier result, Theorem \ref{thm1}, which follows from Jensen's formula.  If $f$ is holomorphic in a domain containing the 
unit disk with $f(0) \neq 0$, then Jensen's formula (see 5.3.1 of Ahlfors \cite{Ahlfors}) states that 
$$ 
\frac{1}{2\pi } \int_0^{2\pi} \log |f(e^{i\theta})| d\theta = \log |f(0)| + \sum_{j} \log \frac{1}{|z_j|}, 
$$
where the sum is over the zeros $z_j$ of $f$ lying inside the unit disk.  
  
 Applying Jensen's formula, and since $P(0)= a_0$, we find 
  $$ 
  \frac{1}{2\pi } \int_0^{2\pi} \log \frac{|P(e^{i\theta})|}{\sqrt{|a_0|} } d\theta = \frac 12\log |a_0| + \sum_{\rho_j <1} \log \frac{1}{\rho_j}.
  $$ 
  But $|a_0| = \prod_{j=1}^{N} \rho_j$, and so the above also equals 
  $$ 
  -\frac 12 \log |a_0| + \sum_{\rho_j >1} \log \rho_j. 
  $$ 
  Adding these two expressions, 
  \begin{equation*} 
  \label{1.31}
    2 \Big(\frac{1}{2\pi } \int_0^{2\pi} \log \frac{|P(e^{i\theta})|}{\sqrt{|a_0|}} d\theta \Big)= \sum_{j} \log \max \Big(\frac{1}{\rho_j}, \rho_j\Big)
  \end{equation*} 
  and, since the left side above is clearly at most $2 h(P)$, Theorem \ref{thm1} follows.  
   
   \section{An observation of Schur.}

  The rest of this article is devoted to proving Theorem \ref{thm2}.   We begin with an observation attributed to Schur (see \cite{AM}) that will allow us to restrict attention to polynomials with all zeros on the unit circle.  
 
  \begin{lemma} Let $P(z) =\prod_{j=1}^{N} (z-\alpha_j)$ with $\alpha_j=\rho_j e^{i\theta_j}$ be as above, and define the polynomial $Q$ by $Q(z) = \prod_{j=1}^{N} (z- e^{i\theta_j})$.   Then for 
  any $z$ with $|z|=1$, we have 
  $$ 
  \frac{|P(z)|}{\sqrt{|a_0|}} \ge |Q(z)|, 
  $$ 
  so that $h(P) \ge h(Q)$. 
  \end{lemma}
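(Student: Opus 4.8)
The plan is to prove the pointwise inequality $|P(z)| \ge \sqrt{|a_0|}\,|Q(z)|$ for $|z|=1$ by comparing the two products factor by factor. For each root $\alpha_j = \rho_j e^{i\theta_j}$, I will show that for every $z$ on the unit circle,
\[
\frac{|z - \rho_j e^{i\theta_j}|}{\sqrt{\rho_j}} \ge |z - e^{i\theta_j}|.
\]
Multiplying these inequalities over $j = 1, \dots, N$ and using $|a_0| = \prod_j \rho_j$ gives exactly $|P(z)|/\sqrt{|a_0|} \ge |Q(z)|$.

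To establish the single-factor inequality, I would write $z = e^{i\phi}$ and set $\psi = \phi - \theta_j$, so that (dividing through by the unit-modulus factor $e^{i\theta_j}$) it suffices to show $|e^{i\psi} - \rho_j|^2 \ge \rho_j |e^{i\psi} - 1|^2$ for all real $\psi$ and all $\rho_j > 0$. Expanding both sides gives $1 - 2\rho_j \cos\psi + \rho_j^2 \ge \rho_j(2 - 2\cos\psi)$, i.e. $1 + \rho_j^2 - 2\rho_j \ge 2\rho_j\cos\psi - 2\rho_j\cos\psi = \dots$ — more carefully, rearranging yields $(1 - \rho_j)^2 \ge 0$, which is manifestly true. (The $\cos\psi$ terms cancel exactly, which is the pleasant surprise that makes the lemma work.) The key algebraic identity here is that $|e^{i\psi}-\rho|^2 - \rho|e^{i\psi}-1|^2 = (1-\rho)^2$ independently of $\psi$.

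The deduction $h(P) \ge h(Q)$ is then immediate: since $|P(e^{i\theta})|/\sqrt{|a_0|} \ge |Q(e^{i\theta})|$ pointwise, and $\log^+$ is monotone nondecreasing, we get $\log^+\big(|P(e^{i\theta})|/\sqrt{|a_0|}\big) \ge \log^+ |Q(e^{i\theta})|$ for every $\theta$. Integrating over $[0, 2\pi]$ and dividing by $2\pi$ gives $h(P) \ge h(Q)$. Here I should note that $Q$ is monic with constant term of modulus $\prod_j |e^{i\theta_j}| = 1$, so $h(Q)$ is exactly $\frac{1}{2\pi}\int_0^{2\pi} \log^+ |Q(e^{i\theta})|\,d\theta$ with $\sqrt{|a_0(Q)|} = 1$, consistent with the definition of $h$.

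There is no real obstacle here; the proof is elementary. The only point requiring a moment's care is handling the case $\rho_j \ge 1$ versus $\rho_j < 1$ uniformly — but the computation above makes no such distinction, since $(1-\rho_j)^2 \ge 0$ regardless. One might also worry whether $z - \alpha_j$ could vanish (making the left side zero), but this happens only if $\rho_j = 1$ and $z = e^{i\theta_j}$, in which case the right side $|z - e^{i\theta_j}|$ vanishes too, so the inequality still holds. Thus the main "work" is simply recognizing the factor-by-factor strategy and performing the one-line expansion.
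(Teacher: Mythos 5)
Your proof is correct and follows essentially the same route as the paper: a factor-by-factor comparison in which the cross terms cancel and the inequality reduces to $(1-\rho_j)^2 \ge 0$, which is just a rearrangement of the paper's use of $\rho_j + 1/\rho_j \ge 2$. The concluding deduction $h(P)\ge h(Q)$ via monotonicity of $\log^+$ (with $|a_0(Q)|=1$) also matches the paper.
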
 
  \begin{proof}  Observe that for any $z$ with $|z|=1$,  
    $$
  \Big| \frac{z}{\sqrt{\rho_j}} - \sqrt{\rho_j} e^{i\theta_j}\Big|^2 = \frac{1}{\rho_j}+\rho_j -2\text{Re } z e^{-i\theta_j} 
  \ge 2 -2 \text{Re }z e^{-i\theta_j} = |z-e^{i\theta_j}|^2, 
  $$ 
and so
  $$ 
  \frac{|P(z)|}{\sqrt{|a_0|}} = \prod_{j=1}^{N} \Big| \frac{z}{\sqrt{\rho_j} }-\sqrt{\rho_j}e^{i\theta_j} \Big| \ge \prod_{j=1}^{N} |z-e^{i\theta_j}| = |Q(z)|,
  $$  
  proving the lemma.
  \end{proof} 
  
 Since the discrepancies ${\mathcal D}(P)$ and ${\mathcal D}(Q)$ are the same, and since $h(P) \ge h(Q)$,  
  it is enough to establish Theorem \ref{thm2} for the polynomial $Q$ and then the corresponding bound for the 
  polynomial $P$ would follow.   In other words, we may assume from now on that all zeros of $P$ lie on the unit circle, so that $\rho_j=1$ for all $j$.

  \section{Smoothed sums over the zeros.} 
  
  Let $P(z) = \prod_{j=1}^{N} (z-e^{i\theta_j})$ be a polynomial of degree $N$ with all zeros on the 
  unit circle. The following lemma establishes a crucial link between the power sums of the 
  zeros (by which we mean $\sum_{j=1}^{N} e^{ik\theta_j}$ for integers $k$) and the size of $P$ on the 
  unit circle.  
  
   \begin{lemma}  \label{lem2}  Let $P(z) = \prod_{j=1}^{N} (z-e^{i\theta_j})$   be as above.  For any integer 
  $k\neq 0$ we have 
  \begin{equation} 
  \label{4.1}  
  \sum_{j=1}^{N} e^{ik\theta_j} =  -\frac{|k|}{\pi}  \int_0^{2\pi} e^{ik\theta} \log |P(e^{i\theta})|  d\theta . 
  \end{equation} 
  Consequently, for any integer $k\neq 0$, 
  \begin{equation} 
  \label{4.2} 
  \Big| \sum_{j=1}^{N} e^{ik\theta_j} \Big| \le 4|k| h(P). 
  \end{equation} 
  \end{lemma}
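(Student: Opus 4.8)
The plan is to prove the identity \eqref{4.1} first, and then derive \eqref{4.2} as a quick consequence. For the identity, I would write $\log|P(e^{i\theta})| = \sum_{j=1}^{N} \log|e^{i\theta} - e^{i\theta_j}|$ and reduce to a single-root computation: for fixed $\phi$, compute the Fourier coefficient $\frac{1}{2\pi}\int_0^{2\pi} e^{ik\theta}\log|e^{i\theta}-e^{i\phi}|\,d\theta$. The standard way is to use the classical Fourier expansion $\log|1-e^{i t}| = -\sum_{m=1}^{\infty}\frac{\cos(mt)}{m} = -\mathrm{Re}\sum_{m\ge 1}\frac{e^{imt}}{m}$, valid in $L^2$ (and as an $L^1$ function on the circle). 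Substituting $t = \theta - \phi$ and reading off the $k$-th Fourier coefficient gives, for $k\ne 0$, the value $-\frac{1}{2|k|}e^{ik\phi}$. Summing over the roots $\phi = \theta_j$ and rearranging yields exactly \eqref{4.1}. I would be slightly careful to note that $\log|e^{i\theta}-e^{i\phi}|$ has only a mild (logarithmic, integrable) singularity at $\theta=\phi$, so all the integrals converge absolutely and the term-by-term integration is justified; alternatively one can run the argument on $|z|=r<1$ where everything is smooth and let $r\to 1$.

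Granting \eqref{4.1}, the bound \eqref{4.2} follows by estimating the integral. Since $|e^{ik\theta}| = 1$, we get
\begin{equation*}
\Big|\sum_{j=1}^N e^{ik\theta_j}\Big| \le \frac{|k|}{\pi}\int_0^{2\pi} \big|\log|P(e^{i\theta})|\big|\,d\theta.
\end{equation*}
Now $\int_0^{2\pi}\big|\log|P(e^{i\theta})|\big|\,d\theta = \int_0^{2\pi}\log^+|P(e^{i\theta})|\,d\theta + \int_0^{2\pi}\log^-|P(e^{i\theta})|\,d\theta$, where $\log^- x = \max(0,-\log x)$. The first integral is controlled directly: since all $\rho_j = 1$ we have $|a_0| = 1$, so $\log^+|P(e^{i\theta})| = \log^+\frac{|P(e^{i\theta})|}{\sqrt{|a_0|}}$, and its integral is $2\pi h(P)$ by definition. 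For the second integral I would invoke Jensen's formula: with all zeros on the unit circle and $|P(0)| = |a_0| = 1$, Jensen gives $\frac{1}{2\pi}\int_0^{2\pi}\log|P(e^{i\theta})|\,d\theta = 0$, hence $\int_0^{2\pi}\log^-|P(e^{i\theta})|\,d\theta = \int_0^{2\pi}\log^+|P(e^{i\theta})|\,d\theta = 2\pi h(P)$. Adding the two pieces gives $\int_0^{2\pi}\big|\log|P(e^{i\theta})|\big|\,d\theta = 4\pi h(P)$, and therefore $\big|\sum_j e^{ik\theta_j}\big| \le \frac{|k|}{\pi}\cdot 4\pi h(P) = 4|k|h(P)$, which is \eqref{4.2}.

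The main obstacle, such as it is, is purely a matter of care rather than depth: justifying the interchange of summation and integration in the presence of the logarithmic singularities of $\log|e^{i\theta}-e^{i\theta_j}|$, and making sure the Fourier series for $\log|1-e^{it}|$ is being used correctly (it converges in $L^2$ and pointwise away from $t=0$, which is all that is needed to extract Fourier coefficients). Everything else — the single-root Fourier computation and the Jensen's-formula bookkeeping that converts the two-sided integral of $\log|P|$ into $4\pi h(P)$ — is routine. One small point worth flagging: the reduction in the previous section lets us assume $\rho_j = 1$ for all $j$, which is what makes $|a_0| = 1$ and lets Jensen's formula give the clean value $0$; without that normalization the constant would come out differently, so I would state explicitly that we are working with the polynomial whose roots all lie on the unit circle.
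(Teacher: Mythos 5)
Your proposal is correct and follows essentially the same route as the paper: reduce \eqref{4.1} to the Fourier coefficient of $\log|e^{i\theta}-e^{i\phi}|$ for a single root, sum over the roots, and then deduce \eqref{4.2} from the triangle inequality together with Jensen's formula, which forces $\int_0^{2\pi}\log|P(e^{i\theta})|\,d\theta=0$ and hence $\int_0^{2\pi}\bigl|\log|P(e^{i\theta})|\bigr|\,d\theta=4\pi h(P)$. The only (harmless) difference is that you quote the classical expansion $\log|1-e^{it}|=-\sum_{m\ge 1}\cos(mt)/m$ to evaluate the single-root coefficient, whereas the paper, aiming to be self-contained, derives the same value by integration by parts and the identity $\sin(kx)/\sin(x/2)=2\sum_{j=1}^{k}\cos\bigl(\tfrac{2j-1}{2}x\bigr)$.
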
 
  
  The link between power sums of the roots and the size of $P$ should not come as a surprise---Newton's 
  identities connecting power sums of the roots with the coefficients of a polynomial are a different version 
  of such a link.  For our purposes the identity \eqref{4.1} will, however, be much more useful than Newton's identities.  
  For a general polynomial $P(z) =\prod_{j=1}^{N} (z-\rho_j e^{i\theta_j})$, the relation \eqref{4.1} may be replaced 
  by 
  $$ 
  \sum_{j=1}^{N} \min \Big(\rho_j , \frac{1}{\rho_j}\Big)^{|k|} e^{ik\theta_j} =  -\frac{|k|}{\pi}  \int_0^{2\pi} e^{ik\theta} \log \frac{|P(e^{i\theta})|}{\sqrt{|a_0|}}  d\theta .
  $$

  \begin{proof}[Proof of Lemma \ref{lem2}]  For  any real number $\phi$ and nonzero integer $k$, we shall show that 
  \begin{equation} 
  \label{lem2.1} 
  e^{ik\phi} = -\frac{|k|}{\pi } \int_0^{2\pi } e^{ik\theta} \log |e^{i\theta} - e^{i\phi}| d\theta, 
  \end{equation} 
  and then \eqref{4.1} follows upon summing this over all $\phi = \theta_j$.  Substituting $\theta =x+\phi$, and dividing both sides by 
  $e^{ik\phi}$,  we see that \eqref{lem2.1} follows from the identity 
  \begin{align} 
  \label{lem2.2} 
  1&= -\frac{|k|}{\pi} \int_0^{2\pi} e^{ikx} \log |e^{ix}-1|dx  = -\frac{|k|}{\pi} \int_0^{2\pi} e^{ikx} \log (2\sin(x/2)) dx \nonumber \\
  &= -\frac{|k|}{\pi} \int_0^{2\pi} \cos (kx) \log (2\sin(x/2)) dx, 
  \end{align} 
  where the last step follows upon pairing $x$ and $2\pi -x$.  
  Since $\cos$ is an even function, it is enough to establish \eqref{lem2.2} in the case when $k$ is positive.  
 Integration by parts shows that the right-hand side of \eqref{lem2.2} equals 
  $$ 
  -\frac{1}{\pi} \int_0^{2\pi} \log (2\sin(x/2)) d\sin(kx) = \frac{1}{2\pi} \int_0^{2\pi} \frac{\sin kx}{\sin (x/2)} \cos (x/2) dx. 
  $$ 
  Since 
  $$ 
  \frac{\sin kx}{\sin (x/2)} = \frac{e^{ikx}-e^{-ikx}}{e^{ix/2}-e^{-ix/2}} = 2 \sum_{j=1}^{k} \cos \Big( \frac{2j-1}{2} x\Big),
  $$  
   it follows that 
   $$
   \frac{1}{2\pi} \int_0^{2\pi} \frac{\sin kx}{\sin (x/2)} \cos (x/2) dx 
   = \sum_{j=1}^{k} \frac{1}{\pi} \int_0^{2\pi} \cos \Big(\frac{2j-1}{2}x\Big) \cos (x/2) dx= 1,  
   $$ 
   which proves \eqref{lem2.2}, and therefore also \eqref{lem2.1} and \eqref{4.1}.  
   
   The triangle inequality gives 
   $$ 
   \Big| \sum_{j=1}^{N} e^{ik \theta_j} \Big| \le \frac{|k|}{\pi} \int_0^{2\pi} \Big| \log |P(e^{i\theta})| \Big| d\theta. 
   $$ 
   Now  
   \begin{align} 
   \label{4.3} 
   \frac{1}{2\pi} \int_{0}^{2\pi} \Big| \log |P(e^{i\theta})| \Big| d\theta &= 
   \frac{1}{2\pi}\int_0^{2\pi} \Big( 2\log^+ |P(e^{i\theta})| - \log |P(e^{i\theta})| \Big) d\theta \nonumber \\
   &= 2h(P),
   \end{align} 
   upon recalling the definition of $h(P)$, and upon noting that Jensen's formula gives $\int_0^{2\pi} \log |P(e^{i\theta})|d\theta =0$. 
   This establishes \eqref{4.2}.
%
\end{proof}

The reader familiar with Weyl's equidistribution theorem (see Chapter 3 of \cite{Korner} for an introduction)
 will recognize at once the significance of Lemma \ref{lem2}.  The estimate \eqref{4.2} shows that if $h(P)$ 
 is known to be small compared to $N$, then so are the power sums $\sum_{j=1}^{N} e^{ik\theta_j}$, at 
 least for small values of $k$.  Weyl's criterion then gives the equidistribution $\mod {2\pi}$ of the angles $\theta_j$.  
 Our goal now is to flesh out this argument; the general procedure is standard, but a few refinements are introduced 
 to obtain Theorem \ref{thm2} in its clean form.  
  
  Let $I$ be an arc on the unit circle, and let ${\mathcal I}(\theta)$ denote the indicator function for the arc $I$, 
  which is  $2\pi$-periodic.  Thus, ${\mathcal I}(\theta) = 1$ if $e^{i\theta} \in I$ and $0$ otherwise.   We are interested 
  in the number of zeros lying on the arc $I$: 
  $$ 
  N(I;P) =  \sum_{j=1}^{N} {\mathcal I}(\theta_j).
  $$ 
  Since ${\mathcal I}$ is periodic, it is tempting to invoke its Fourier expansion.  This is a little delicate, since the function ${\mathcal I}$ is 
  discontinuous and its Fourier series is not absolutely convergent.  Instead we will work with ``smoothed sums over zeros" $\sum_{j=1}^{N} g(\theta_j)$ where $g$ is a $2\pi$-periodic function with better behaved Fourier series, and then choose $g$ to be a suitable 
  approximation to the indicator function ${\mathcal I}$.  
  
   \begin{proposition} \label{lem4} Let $P(z) = \prod_{j=1}^{N} (z- e^{i\theta_j})$ be as above.  Let $g(\theta)$ be a $2\pi$-periodic continuous function such that 
  $$ 
  \sum_{k=-\infty}^{\infty} |k {\widehat g}(k) | < \infty, 
  $$ 
  where 
  $$ 
  {\widehat g}(k) = \frac{1}{2\pi } \int_0^{2\pi} g(\theta) e^{-ik\theta} d\theta
  $$ 
  denotes the Fourier coefficients of $g$.   Put 
  $$
  G(\theta) = \sum_{k=-\infty}^{\infty} |k| {\widehat g}(k) e^{ik\theta} \qquad \text{and} \qquad G= \max_{\theta} |G(\theta)|. 
  $$ 
  Then 
  $$ 
  \Big| \sum_{j=1}^{N} g(\theta_j) - \frac{N}{2\pi} \int_0^{2\pi} g(\theta) d\theta \Big| \le 4Gh(P). 
  $$ 
  \end{proposition}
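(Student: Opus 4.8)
The plan is to expand $g$ in its Fourier series, apply the identity \eqref{4.1} of Lemma \ref{lem2} to each exponential, and then estimate the resulting integral using the computation \eqref{4.3}.

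First, since $\sum_k |k\widehat g(k)| < \infty$ forces $\sum_k |\widehat g(k)| < \infty$, the Fourier series of the continuous function $g$ converges absolutely and uniformly to $g$, so $g(\theta) = \sum_{k=-\infty}^{\infty} \widehat g(k) e^{ik\theta}$ for every $\theta$. I would substitute $\theta = \theta_j$ and sum over $j$; since this is a finite sum and $|\sum_{j=1}^N e^{ik\theta_j}| \le N$, I may interchange the order of summation to get
\[
\sum_{j=1}^N g(\theta_j) = \sum_{k=-\infty}^{\infty} \widehat g(k) \sum_{j=1}^N e^{ik\theta_j}.
\]
The term $k=0$ equals $N\widehat g(0) = \frac{N}{2\pi}\int_0^{2\pi} g(\theta)\, d\theta$, so subtracting it shows that the quantity to be bounded is $\big|\sum_{k\neq 0} \widehat g(k) \sum_{j=1}^N e^{ik\theta_j}\big|$.

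Next I would insert \eqref{4.1} for each nonzero $k$, turning this into $-\frac{1}{\pi}\sum_{k\neq 0} |k|\,\widehat g(k) \int_0^{2\pi} e^{ik\theta}\log|P(e^{i\theta})|\, d\theta$. The hypothesis $\sum_k |k\widehat g(k)| < \infty$ guarantees that $G(\theta) = \sum_{k} |k|\,\widehat g(k) e^{ik\theta}$ (the $k=0$ term being zero) converges uniformly, hence $G$ is continuous and $G = \max_\theta |G(\theta)| < \infty$; meanwhile $\log|P(e^{i\theta})|$ is integrable, with $\frac{1}{2\pi}\int_0^{2\pi}\big|\log|P(e^{i\theta})|\big|\, d\theta = 2h(P)$ by \eqref{4.3}. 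These two facts justify interchanging the sum over $k$ with the integral, giving
\[
\sum_{j=1}^N g(\theta_j) - \frac{N}{2\pi}\int_0^{2\pi} g(\theta)\, d\theta = -\frac{1}{\pi}\int_0^{2\pi} G(\theta)\log|P(e^{i\theta})|\, d\theta.
\]

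Finally the triangle inequality, followed by $|G(\theta)| \le G$ and \eqref{4.3}, yields
\[
\Big|\sum_{j=1}^N g(\theta_j) - \frac{N}{2\pi}\int_0^{2\pi} g(\theta)\, d\theta\Big| \le \frac{G}{\pi}\int_0^{2\pi} \big|\log|P(e^{i\theta})|\big|\, d\theta = \frac{G}{\pi}\cdot 4\pi h(P) = 4Gh(P),
\]
which is the claim. I do not anticipate a genuine obstacle here: the whole content is the formal Fourier manipulation, and the only care needed is in the two interchanges (the finite sum over $j$, and the infinite sum over $k$ against the integral), both of which are routine once one invokes the absolute/uniform convergence supplied by the hypothesis together with the integrability of $\log|P|$ from \eqref{4.3}. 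It is worth noting that applying \eqref{4.1} directly, rather than the cruder bound \eqref{4.2} term by term, is what produces the sharp constant $4G$ in place of $4\sum_k |k\widehat g(k)|$; this sharper form is what will matter when $g$ is later chosen to approximate the indicator function of an arc.
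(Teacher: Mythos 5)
Your proposal is correct and follows essentially the same route as the paper: expand $g$ in its Fourier series, insert the identity \eqref{4.1} from Lemma \ref{lem2} for each $k\neq 0$, swap the sum with the integral to produce $-\frac{1}{\pi}\int_0^{2\pi} G(\theta)\log|P(e^{i\theta})|\,d\theta$, and finish with \eqref{4.3}. The paper is simply more terse about the convergence justifications you spell out, and your closing remark about using \eqref{4.1} rather than \eqref{4.2} to get the constant $G$ instead of $\sum_k |k\widehat g(k)|$ is exactly the point of the sharper formulation.
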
 
  
  If the $2\pi$-periodic function $g$ is $\ell$-times continuously differentiable, then integration by parts $\ell$ times 
  gives (for $k\neq 0$)
  $$ 
  |{\widehat g}(k) |= \Big| \frac{1}{(ik)^{\ell}} \frac{1}{2\pi } \int_{0}^{2\pi} g^{(\ell)}(\theta) e^{-ik\theta} d\theta\Big| 
  \le \frac{1}{|k|^{\ell}} \max_{\theta \in [0,2\pi)} |g^{(\ell)}(\theta)|.  
  $$ 
  Thus, for example, any thrice continuously differentiable function will meet the hypothesis of Proposition \ref{lem4} and there 
  is a rich supply of such functions.


%

  
    \begin{proof}[Proof of Proposition \ref{lem4}]    
  Using the Fourier expansion of $g$, we obtain 
  $$ 
   \sum_{j=1}^{N} g(\theta_j) - \frac{N}{2\pi} \int_0^{2\pi} g(\theta) d\theta = \sum_{k\neq 0} {\widehat g}(k) \sum_{j=1}^{N} e^{ik\theta_j}, 
   $$ 
  and so by Lemma \ref{lem2} this equals
  $$ 
  -\sum_{k\neq 0} {\widehat g}(k) \frac{|k|}{\pi} \int_0^{2\pi} e^{ik\theta} \log |P(e^{i\theta})| d\theta = 
  -\frac{1}{\pi} \int_0^{2\pi} \log |P(e^{i\theta})| \sum_{k\neq 0} |k| {\widehat g}(k) e^{ik\theta} d\theta. 
  $$
  From the definition of $G$, in magnitude the above is bounded by 
  \begin{align*}
  \frac{G}{\pi} \int_{0}^{2\pi} \Big| \log |P(e^{i\theta})| \Big| d\theta &= \frac{G}{\pi} \int_0^{2\pi} \Big( 2\log^{+}|P(e^{i\theta})| - \log |P(e^{i\theta})| \Big) d\theta \\
  &= 4G h(P),
  \end{align*} 
 upon recalling \eqref{4.3}. 
  \end{proof}

 To pave the way for the proof of Theorem \ref{thm2} in the next section, we work out the bound of Proposition \ref{lem4} for a particular class 
 of functions $g$.  The idea is that one can construct functions $g$ meeting the hypothesis of Proposition \ref{lem4} by convolving the indicator function ${\mathcal I}$ 
 with suitable nice functions ${\mathcal K}$.  In the next section, we shall make a specific choice for ${\mathcal K}$ so that the resulting function $g$ 
 approximates the indicator function ${\mathcal I}$ well.   
   
 \begin{lemma} \label{lem5}  Let $I$ be an arc on the unit circle,  and let ${\mathcal I}(\theta)$ denote its indicator function as above.  
 Let ${\mathcal K}$ be a $2\pi$-periodic continuous function that is always nonnegative, 
 and whose Fourier coefficients ${\widehat {\mathcal K}}(n)$ are all nonnegative, with $\sum_{n\in {\Bbb Z}} {\widehat {\mathcal K}}(n) < \infty$.  
 Let $g$ be the convolution of ${\mathcal I}$ and ${\mathcal K}$. Thus, 
  \begin{equation*} 
 \label{1.7}
  g(\theta) =\frac{1}{2\pi } \int_0^{2\pi} {\mathcal I}(\alpha) {\mathcal K}(\theta-\alpha) d\alpha. 
 \end{equation*} 
Then, $g$ satisfies the hypothesis of Proposition \ref{lem4}, and in the notation used there,  
$$ 
G = \max_{\theta}  \Big | \sum_{k=-\infty}^{\infty} |k| {\widehat g}(k) e^{ik \theta} \Big| \le \frac{2}{\pi^2} {\mathcal K}(0).
$$
 \end{lemma}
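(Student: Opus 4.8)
The plan is to combine the convolution theorem with an explicit evaluation of the Fourier coefficients of the indicator ${\mathcal I}$, and then to exploit \emph{both} positivity hypotheses on ${\mathcal K}$ — that ${\mathcal K}\ge 0$ pointwise and that $\widehat{{\mathcal K}}(n)\ge 0$ — in order to get past the naive constant.

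First I would record that, since $g={\mathcal I}*{\mathcal K}$, the Fourier coefficients multiply: $\widehat{g}(k)=\widehat{{\mathcal I}}(k)\widehat{{\mathcal K}}(k)$. If $I$ consists of the angles $\theta\in[\theta_0-\tfrac{|I|}{2},\,\theta_0+\tfrac{|I|}{2}]$, an elementary integration gives, for $k\ne 0$, $\widehat{{\mathcal I}}(k)=e^{-ik\theta_0}\sin(k|I|/2)/(\pi k)$, so that $|k\widehat{{\mathcal I}}(k)|=|\sin(k|I|/2)|/\pi\le 1/\pi$. Since $\widehat{{\mathcal K}}(k)\ge 0$, it follows that $\sum_k|k\widehat{g}(k)|=\sum_{k\ne 0}|k\widehat{{\mathcal I}}(k)|\,\widehat{{\mathcal K}}(k)\le\frac1\pi\sum_n\widehat{{\mathcal K}}(n)<\infty$, which is exactly the hypothesis needed to apply Proposition \ref{lem4} to $g$.

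Next, to bound $G=\max_\theta|G(\theta)|$, I would substitute $\widehat{g}(k)=\widehat{{\mathcal I}}(k)\widehat{{\mathcal K}}(k)$ into $G(\theta)=\sum_{k\ne 0}|k|\widehat{g}(k)e^{ik\theta}$ and apply the triangle inequality, using once more that $\widehat{{\mathcal K}}(k)\ge 0$ and that $\widehat{{\mathcal K}}(-k)=\widehat{{\mathcal K}}(k)$, to obtain the bound (uniform in $\theta$)
$$ |G(\theta)|\ \le\ \sum_{k\ne 0}|k\widehat{{\mathcal I}}(k)|\,\widehat{{\mathcal K}}(k)\ =\ \frac{2}{\pi}\sum_{k=1}^{\infty}\widehat{{\mathcal K}}(k)\Big|\sin\Big(\tfrac{k|I|}{2}\Big)\Big|. $$
The statement is thus reduced to the claim that $\sum_{k\ge 1}\widehat{{\mathcal K}}(k)|\sin(k|I|/2)|\le \tfrac1\pi{\mathcal K}(0)$.

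This last claim is the only step that needs an idea rather than bookkeeping: the trivial bound $|\sin(k|I|/2)|\le 1$ gives merely $\sum_k\widehat{{\mathcal K}}(k)|\sin(k|I|/2)|\le\tfrac12{\mathcal K}(0)$ and hence only $G\le\tfrac1\pi{\mathcal K}(0)$, which falls short of $\tfrac{2}{\pi^2}{\mathcal K}(0)$. Instead I would insert the Fourier expansion $|\sin x|=\tfrac{2}{\pi}-\tfrac{4}{\pi}\sum_{j\ge1}\tfrac{\cos 2jx}{4j^2-1}$, interchange the two summations (legitimate since $\sum_j(4j^2-1)^{-1}=\tfrac12$ and $\sum_n\widehat{{\mathcal K}}(n)<\infty$ make the double series absolutely convergent), and then evaluate the resulting inner sums using ${\mathcal K}(x)=\widehat{{\mathcal K}}(0)+2\sum_{k\ge1}\widehat{{\mathcal K}}(k)\cos(kx)$: with $\gamma=|I|/2$ one gets $\sum_{k\ge1}\widehat{{\mathcal K}}(k)\cos(2jk\gamma)=\tfrac12\big({\mathcal K}(2j\gamma)-\widehat{{\mathcal K}}(0)\big)$. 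Here is where ${\mathcal K}\ge 0$ pointwise finally enters: each such inner sum is $\ge -\tfrac12\widehat{{\mathcal K}}(0)$. Feeding this in, together with $\sum_{k\ge1}\widehat{{\mathcal K}}(k)=\tfrac12({\mathcal K}(0)-\widehat{{\mathcal K}}(0))$, the value $\sum_j(4j^2-1)^{-1}=\tfrac12$, and $\widehat{{\mathcal K}}(0)\ge0$, everything collapses to $\sum_{k\ge1}\widehat{{\mathcal K}}(k)|\sin(k\gamma)|\le\tfrac1\pi{\mathcal K}(0)$, whence $G\le\tfrac{2}{\pi^2}{\mathcal K}(0)$. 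The remaining verifications (absolute convergence, the telescoping sum $\sum_j(4j^2-1)^{-1}=\tfrac12$, the evenness of $\widehat{{\mathcal K}}$) are all routine.
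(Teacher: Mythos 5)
Your proposal is correct and follows essentially the same route as the paper: the convolution identity ${\widehat g}(k)={\widehat {\mathcal I}}(k){\widehat {\mathcal K}}(k)$ with the explicit bound $|k{\widehat {\mathcal I}}(k)|=|\sin(k|I|/2)|/\pi$, then the Fourier expansion of $|\sin x|$ together with the pointwise nonnegativity of ${\mathcal K}$ to pass from the trivial constant $1/\pi$ to $2/\pi^2$. The only (harmless) difference is cosmetic: you apply the triangle inequality directly to $\sum_k |k{\widehat g}(k)|$, which fixes the argument of the sine at the half-length $|I|/2$, whereas the paper keeps the two endpoint phases and bounds by a maximum over $\phi$ before invoking the same $|\sin|$ expansion; also, your appeal to ${\widehat{\mathcal K}}(0)\ge 0$ at the end is not actually needed since those terms cancel exactly.
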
 
 \begin{proof}     Suppose that ${\mathcal I}$ is the arc from $e^{i\alpha}$ to $e^{i\beta}$, so that for $k\neq 0$ we have 
 $$ 
 {\widehat {\mathcal I}}(k) = \frac{1}{2\pi } \int_{\alpha}^{\beta} e^{-ik y} dy = \frac{e^{-ik\alpha}- e^{-ik\beta}}{2\pi i k}.
 $$  
 The Fourier coefficients of the convolution of two functions are the products of the Fourier coefficients of those 
 functions; thus ${\widehat g}(k) = {\widehat {\mathcal I}}(k) {\widehat {\mathcal K}}(k)$.   Therefore $|k{\widehat g}(k)| =|k{\widehat {\mathcal I}}(k)| {\widehat {\mathcal K}}(k) \le {\widehat {\mathcal K}}(k)/\pi$ so that 
 $$ 
 \sum_{k=-\infty}^{\infty} |k {\widehat g}(k)| \le \frac{1}{\pi} \sum_{k=-\infty}^{\infty} {\widehat {\mathcal K}}(k) = \frac{1}{\pi} {\mathcal K}(0) < \infty. 
 $$ 
 This shows that the hypothesis $\sum_{k} |k{\widehat g}(k)| < \infty$ in Proposition \ref{lem4} is satisfied, and moreover establishes the bound $G \le {\mathcal K}(0)/\pi$.   
 
 To obtain the more precise bound for $G$ claimed in our lemma, note that 
 \begin{align*}
 G(\theta) &= \sum_{k=-\infty}^{\infty} |k| {\widehat g}(k) e^{i k \theta}  = 
 \sum_{k=-\infty}^{\infty} |k| {\widehat {\mathcal K}}(k) {\widehat {\mathcal I}}(k) e^{ik\theta} \\
& =\frac{1}{2\pi i} \sum_{\substack{k=-\infty \\ k\neq 0 }}^{\infty} \text{sgn}(k)  {\widehat {\mathcal K}}(k)  (e^{ik(\theta-\alpha)} - e^{ik(\theta-\beta)})
 .
 \end{align*}
 Pairing the terms $k$ and $-k$ together, and using ${\widehat {\mathcal K}}(k) = {\widehat {\mathcal K}}(-k)$ 
 (since ${\mathcal K}$ and ${\widehat {\mathcal K}}$ are real valued), we find  
 \begin{align*}
|G(\theta)|  
 &=\frac{1}{2\pi} \Big| \sum_{\substack{k=-\infty \\ k\neq 0 }}^{\infty} \text{sgn}(k) {\widehat {\mathcal K}}(k) \Big( \sin( k (\theta -\alpha) -\sin(k (\theta- \beta)\Big) \Big| 
\\
& \le 2 \max_{\phi} \frac{1}{2\pi} \sum_{k=-\infty}^{\infty} {\widehat {\mathcal K}}(k) |\sin (k\phi)| . 
 \end{align*}
  Therefore 
 \begin{equation*}
 \label{1.8}
 G= \max_{\theta} |G(\theta)| \le 2 \max_{\phi}\frac{1}{2\pi} \sum_{k=-\infty}^{\infty} {\widehat {\mathcal K}}(k) |\sin (k\phi)|. 
 \end{equation*} 
 A simple calculation gives the Fourier expansion 
 $$ 
|\sin x| = \frac{2}{\pi} - \frac{4}{\pi} \sum_{\ell =1}^{\infty} \frac{\cos(2\ell x)}{4\ell^2-1},
$$ 
and so  we obtain 
\begin{align*}
\frac{1}{2\pi} \sum_{k=-\infty}^{\infty} {\widehat {\mathcal K}}(k) |\sin (k\phi)| 
&= \frac{1}{2\pi} \sum_{k=-\infty}^{\infty} {\widehat {\mathcal K}}(k)  \Big(\frac{2}{\pi} - \frac{4}{\pi} \sum_{\ell =1}^{\infty} \frac{\cos(2k\ell \phi)}{4\ell^2-1}\Big) \\
&= \frac{1}{\pi^2} {\mathcal K}(0) - \frac{2}{\pi^2} \sum_{\ell=1}^{\infty} \frac{{\mathcal K}(2\ell\phi)}{4\ell^2-1} \le \frac{1}{\pi^2} {\mathcal K}(0), 
\end{align*}
proving the lemma.
 \end{proof}  
 
 \section{Proof of Theorem \ref{thm2}.} 
   Let $I$ be an arc on the unit circle.  To establish \eqref{1.4} it is enough to show that 
 \begin{equation} 
 \label{1.9} 
    N(I;P) - \frac{|I|}{2\pi} N \le \frac{8}{\pi} \sqrt{Nh(P)}. 
 \end{equation}  
 Once the upper bound is in place, we may use that 
 $$
 N(I;P)-|I|N/(2\pi) = |I^{c}|N/(2\pi) - N(I^c;P),
 $$
  where $I^c$ denotes the arc complementary to $I$, to obtain a corresponding lower bound, and thus complete the proof of 
 Theorem 2.

 Let $g$ be a $2\pi$-periodic function that majorizes the indicator function of $I$; that is, $g(\theta) \ge 0$ always, and $g(\theta) \ge 1$ if 
 $e^{i\theta} \in I$.   Then 
 \begin{align} 
 \label{1.10} 
  N(I;P) - &\frac{|I|}{2\pi} N \le \sum_{j=1}^N g(\theta_j) - \frac{|I|}{2\pi} N \nonumber \\ 
  &= 
\Big( \sum_{j=1}^{N} g(\theta_j) - \frac{N}{2\pi} \int_0^{2\pi} g(\theta)d\theta\Big) + N\Big( \frac{1}{2\pi} \int_{0}^{2\pi} g(\theta) d\theta -\frac{|I|}{2\pi} \Big).
 \end{align} 
Now the strategy is to find a nice function $g$ for which we can use Proposition \ref{lem4} and  Lemma \ref{lem5} to bound the first term above, 
 while also keeping $g$ close to the indicator function of $I$ so that the second term is also small.   
   
 Given $\pi >\delta >0$, let $I_{\delta}$ denote the arc obtained by widening $I$ on either side by $\delta$. (If $|I|+2\delta >2\pi$ then take $I_\delta$ to be all of the unit circle.)  Let us denote by ${\mathcal I}_\delta$ the indicator function 
  of the widened arc $I_{\delta}$. 
    Let ${\mathcal K}_\delta$ denote the $2\pi$-periodic function, given by 
  $$ 
  {\mathcal K}_\delta(\theta) = \frac{2\pi}{\delta^2} \max (\delta -|\theta|, 0) 
  $$ 
  for $\theta \in (-\pi, \pi]$.  
  The function ${\mathcal K}_\delta$ is closely related to the Fejer kernel (see, for example, Chapter 2 of \cite{Korner}), and its Fourier coefficients are easily computed: ${\widehat {\mathcal K}}_{\delta}(0) =  1$, and for $k\neq 0$ 
  $$ 
  {\widehat {\mathcal K}_{\delta}}(k) = \Big( \frac{\sin (k\delta/2)}{k\delta/2}\Big)^2. 
  $$ 
  Take $g$ to be the convolution of ${\mathcal I}_{\delta}$ and ${\mathcal K}_{\delta}$; thus 
   $
  g(\theta) = \frac{1}{2\pi } \int_0^{2\pi} {\mathcal I}_\delta(\alpha) {\mathcal K}_\delta(\theta-\alpha) d\alpha$. 
 From the definition of ${\mathcal K}_\delta$, and noting that $\frac{1}{2\pi }\int_0^{2\pi} {\mathcal K}_\delta(\alpha) d\alpha =1$, we see easily that the function $g(\theta)$ is always nonnegative, and it equals $1$ if $e^{i\theta} \in I$.   We may think of $g$ as the indicator function ${\mathcal I}$ ``smeared out" over a $\delta$ neighborhood of the arc $I$.  If we make $\delta$ smaller, our approximation $g$ 
 is closer to ${\mathcal I}$ and the second term on the right in \eqref{1.10} will become smaller, but, on the other hand, the function $g$ will become 
 ``less smooth" and the first term on the right in \eqref{1.10} will become larger.  The idea is to choose $\delta$ optimally so as to balance these two effects.     
 
Note that 
$$
{\widehat g}(0) = \frac{1}{2\pi } \int_0^{2\pi} g(\theta) d\theta = \frac{|I_\delta|}{2\pi} = \frac{|I|+2\delta}{2\pi}, 
$$ 
unless $I_\delta$ is all of the unit circle in which case ${\widehat g}(0)=1$.  Since $g$ majorizes the indicator function of $I$, we 
may use \eqref{1.10}, and from our evaluation of ${\widehat g}(0)$ it follows that the second term in the right side of \eqref{1.10} is at most $N\delta/\pi$.  

To bound the first term in \eqref{1.10}, we appeal to Proposition \ref{lem4} and Lemma \ref{lem5}.  They show that 
\begin{align*}
\Big| \sum_{j=1}^{N} g(\theta_j)  - \frac{N}{2\pi} \int_0^{2\pi} g(\theta) d\theta \Big| &\le 4 h(P) \max_{\theta} \Big| \sum_{k=-\infty}^{\infty} |k|{\widehat g}(k) e^{ik \theta } \Big| 
\\
&\le \frac{8}{\pi^2} {\mathcal K}_{\delta}(0) h(P) = \frac{16}{\pi\delta} h(P). 
\end{align*}   
We conclude that 
$$ 
\sum_{j=1}^{N} g(\theta_j) - \frac{|I|}{2\pi} N \le \frac{16}{\pi \delta} h(P) + \frac{\delta}{\pi}N, 
$$ 
and choosing $\delta = 4\sqrt{h(P)/N}$, the estimate \eqref{1.9} follows.  The proof of Theorem 2 is now complete.   

\smallskip 

We conclude by looking back at the proofs, and pointing out the key steps.  Theorem 1, showing that the roots accumulate near the unit circle, was a 
simple application of Jensen's formula.  The more difficult Theorem 2, which gives the equidistribution of the angles of the roots, began with an observation of Schur allowing us to restrict attention to the case when all roots lie on the unit circle.   Then the key identity is contained in Lemma 2, which connects power sums of the roots with the size of the polynomial on the unit circle.   Lemma 2 allows us to understand smooth sums over the angles of the roots, as in Proposition 1.   The last step is the passage from smooth sums over angles to identifying angles lying on arcs, and this is carried out in Lemma 3 together with the work of this section. 
 
   \medskip 
 \noindent {\bf Acknowledgment.} 
   I am grateful to Emanuel Carneiro, Persi Diaconis,  Andrew Granville, Emmanuel Kowalski,  Chen Lu, Pranav Nuti, and the referees for helpful comments, and 
especially to Pranav Nuti for producing Figure 1.  I am partially supported by a grant from the National Science Foundation, 
and a Simons Investigator grant from the Simons Foundation.  Part of the paper was written while the author was a Gauss Visiting Professor at 
G{\" o}ttingen; I thank the University, and the Akademie der Wissenschaften zu G{\" o}ttingen for their generous hospitality.


 \end{document}